\def\th@plain{%
  \itshape 
}
\renewenvironment{proof}[1][\proofname]{\par
  \pushQED{\qed}%
  \normalfont \topsep6\p@\@plus6\p@\relax
  \trivlist
  \item[\hskip\labelsep
        \bfseries
    #1\@addpunct{.}]\ignorespaces
}{%
  \popQED\endtrivlist\@endpefalse
}
\numberwithin{equation}{section}
\newtheorem{thm}{Theorem}[section]
\newtheorem{lem}[thm]{Lemma}
\newtheorem{conj}[thm]{Conjecture}
\numberwithin{equation}{section}
\numberwithin{equation}{section}
\begin{document}

\title{\LARGE Equitable partition of graphs into induced linear forests
}
\author{Xin Zhang\,\thanks{Supported by the National Natural Science Foundation of China (No.\,11871055).}~\thanks{Corresponding author. Emails: xzhang@xidian.edu.cn (X. Zhang) beiniu@stu.xidian.edu.cn (B. Niu).}\,\, \,\,\,\,Bei Niu\\
{\small School of Mathematics and Statistics, Xidian University, Xi'an, Shaanxi, 710071, China}\\
}

\maketitle

\begin{abstract}\baselineskip 0.60cm
It is proved that the vertex set of any simple graph $G$ can be equitably partitioned into $k$ subsets for any integer $k\geq\max\{\big\lceil\frac{\Delta(G)+1}{2}\big\rceil,\big\lceil\frac{|G|}{4}\big\rceil\}$ so that each of them induces a linear forest.
\\\vspace{3mm}\noindent \emph{Keywords: equitable coloring; vertex arboricity; linear forest}.
\end{abstract}

\baselineskip 0.60cm

\section{Introduction }

All graphs considered in this paper are simple and finite. A \emph{tree- (resp.\,path-) $k$-coloring} of a graph $G$ is a function $c$ from $V(G)$ to the set $\{1,2,\dots,k\}$ so that $c^{-1}(i)$, the \emph{color class} $i$, induces a forest (resp.\,linear forest) for each integer $1\leq i\leq k$. Here a \emph{linear forest} is a forest with each connected component being a path.

A tree- (resp.\,path-) $k$-coloring is \emph{equitable} if the sizes of any two color classes differ by at most one. The minimum integer $k$ such that a graph $G$ admits an equitable tree- (resp.\,path-) $k$-coloring  is the \emph{equitable vertex arboricity} (resp.\,\emph{equitable linear vertex arboricity}) of $G$, denoted by $va^{=}(G)$ (resp.\,$lva^{=}(G)$). Note that the complete bipartite graph $K_{9,9}$ has equitable vertex arboricity (resp.\,equitable linear vertex arboricity) two, but it is impossible to construct an equitable tree- (resp.\,path-) $3$-coloring  of $K_{9,9}$. This motivates us to define another chromatic parameter so-called the \emph{equitable vertex arborable threshold} (resp.\,\emph{equitable linear vertex arborable threshold}). Formally, it is the minimum integer $k$ such that $G$ admits an equitable tree- (resp.\,path-) $k'$-coloring for every integer $k'\geq k$, denoted by $va^{\equiv}(G)$ (resp.\,$lva^{\equiv}(G)$). Clearly, $va^{=}(G)\leq va^{\equiv}(G)$ and  $lva^{=}(G)\leq lva^{\equiv}(G)$.

For the complete bipartite graph $K_{n,n}$, it is trivial that $va^{=}(K_{n,n})=2$. For its equitable vertex arborable threshold, Wu, Zhang and Li \cite{WZL.2013} showed that $va^{\equiv}(K_{n,n})=2\big\lfloor(\sqrt{8n+9}-1)/4\big\rfloor$ if $2n=t(t+3)$ and $t$ is odd. This implies that the gap between $va^{=}(G)$ and $va^{\equiv}(G)$ can be any large.
Since $2=lva^{=}(K_{n,n})=va^{=}(K_{n,n})\leq va^{\equiv}(K_{n,n})\leq lva^{\equiv}(K_{n,n})$, the gap between $lva^{=}(G)$ and $lva^{\equiv}(G)$ can also be any large.

The notions of the equitable vertex arboricity and the equitable vertex arborable threshold were introduced by Wu, Zhang and Li \cite{WZL.2013} in 2013, who put forward the following two conjectures.

\begin{conj}[\emph{\textbf{Equitable Vertex Arboricity Conjecture}}]\label{conj1}
$va^{\equiv}(G)\leq \big\lceil\frac{\Delta(G)+1}{2}\big\rceil$ for every graph $G$.
\end{conj}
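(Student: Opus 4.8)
The plan is to combine a Lov\'asz-type partition theorem, which supplies a non-equitable tree-$k$-coloring, with an extremal/local-switching argument that balances the class sizes while preserving acyclicity. First I would record that for every $k\geq\lceil(\Delta(G)+1)/2\rceil$ the vertices of $G$ admit a partition into $k$ parts each inducing a subgraph of maximum degree at most one: taking a partition into $k$ parts that minimizes the number of monochromatic edges forces every vertex $v$ to have at most $\deg(v)/k\leq\Delta/k$ neighbours in its own part (otherwise $v$ could be moved to a part where it has fewer neighbours, decreasing the edge count), and since $k\geq(\Delta+1)/2$ gives $\Delta/k<2$, each part has maximum degree at most one. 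Thus each part is in fact a linear forest, and the bound $va^{\equiv}(G)\leq\lceil(\Delta+1)/2\rceil$ holds at the level of existence. The entire difficulty is to make such a coloring equitable for \emph{every} admissible $k$.

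To enforce equitability I would fix $k$ and, among all tree-$k$-colorings of $G$, select one minimizing the potential $\Phi=\sum_{i=1}^{k}\binom{|V_i|}{2}$, which is smallest exactly when the class sizes are as balanced as possible. If such a minimizer is not equitable, there are classes with $|V_i|\geq|V_j|+2$, and moving a single vertex $v\in V_i$ into $V_j$ changes $\Phi$ by $|V_j|-|V_i|+1\leq-1$, a strict decrease; hence producing one legal move contradicts minimality. So the whole problem reduces to a switching lemma: whenever $|V_i|\geq|V_j|+2$, some $v\in V_i$ can be recolored $j$ with $G[V_j\cup\{v\}]$ still a forest, i.e.\ $v$ has at most one neighbour in each tree-component of $G[V_j]$. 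Because $v$ has at most $\Delta\leq 2k-1$ neighbours spread over $k$ classes, a counting argument bounds how many can lie in $V_j$, and I would exploit the acyclicity of $G[V_i]$ to locate a vertex whose few neighbours in $V_j$ fall in distinct components.

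The main obstacle, and the reason the conjecture is delicate, is precisely this cycle-creation constraint: unlike proper equitable coloring, where Kempe chains and the Hajnal--Szemer\'edi machinery apply directly, moving a vertex into a smaller induced forest may merge two tree-components or close a cycle, so a single swap can be blocked for \emph{every} candidate vertex at once. To circumvent this I would develop an augmenting-structure argument: starting from a blocked vertex, follow a chain of forced recolorings alternating between $V_i$, $V_j$, and possibly auxiliary classes, where each step preserves the forest property and the chain terminates by strictly decreasing $\Phi$. Controlling termination and ruling out the degenerate configurations in which no augmenting chain exists is where most of the effort concentrates; I expect it to require a discharging analysis on the components of $G[V_i]\cup G[V_j]$ combined with the degree bound $\Delta\leq 2k-1$.

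Finally, I would package the argument in an induction on $|G|$, deleting a carefully chosen vertex whose neighbourhood structure lets an inductive coloring be extended and reserving the switching lemma for the final balancing step. I expect the inductive scaffolding to be routine; the genuine difficulty is the switching/augmenting lemma, whose proof must manage the interaction of the two constraints---bounded class-size gap and acyclicity---simultaneously.
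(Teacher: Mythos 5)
The statement you are proving is Conjecture \ref{conj1}, which the paper does \emph{not} prove: it is explicitly stated to be widely open, and the paper only confirms it for graphs with $\Delta(G)\geq(|G|-1)/2$ (Theorem \ref{EVAT}), together with the weaker general bound $\max\{\lceil(\Delta(G)+1)/2\rceil,\lceil|G|/4\rceil\}$ (Section \ref{sec:3}). Your proposal is not a proof of the conjecture either: it is a plan whose central step is missing. The first paragraph is fine --- minimizing the number of monochromatic edges over all $k$-partitions does yield parts of maximum degree at most one when $k\geq\lceil(\Delta(G)+1)/2\rceil$, so the \emph{non-equitable} statement $lva(G)\leq\lceil(\Delta(G)+1)/2\rceil$ holds. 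But everything after that reduces the equitable statement to a ``switching lemma'' (whenever $|V_i|\geq|V_j|+2$, some vertex or augmenting chain of vertices can be legally recolored so as to decrease $\Phi$), and this lemma is never proved; you only say you ``expect'' it to follow from a discharging analysis. That lemma \emph{is} the conjecture, in essence: the whole difficulty of equitable vertex arboricity is that such local repairs can be simultaneously blocked, and no argument in the literature overcomes this in general. Your own text concedes the point (``where most of the effort concentrates'').

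Two concrete technical problems beyond the missing lemma. First, your counting step is circular: the bound of at most one same-class neighbour per vertex holds at a minimizer of the monochromatic-edge count, while your balancing argument works with a minimizer of $\Phi=\sum_i\binom{|V_i|}{2}$; these are different potentials, and a $\Phi$-minimizer need not satisfy the degree bound, so ``$v$ has at most $\Delta\leq 2k-1$ neighbours spread over $k$ classes'' gives no useful bound on how many lie in the single class $V_j$ (they could all lie there). Second, the $K_{9,9}$ phenomenon cited in the paper shows that equitable tree-colorability is not monotone in $k$, so local switching arguments face genuine global obstructions; any correct proof must explain why such obstructions cannot occur once $k\geq\lceil(\Delta(G)+1)/2\rceil$, and nothing in your outline does so. For comparison, the paper's partial result proceeds by an entirely different, constructive route: it passes to the complement graph $G^c$, whose minimum degree is large when $\Delta(G)\geq(|G|-1)/2$, extracts long paths, cycles, or matchings there (Lemmas \ref{lem-exe}--\ref{lem-use}, Dirac-type arguments), and uses non-adjacency in $G^c$ to assemble the equitable classes directly --- no recoloring or balancing step is ever needed. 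If you restrict your ambition to the paper's hypothesis $\Delta(G)\geq(|G|-1)/2$, that complement-graph construction is the approach to emulate; as written, your proposal does not establish the conjecture.
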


\begin{conj}\label{conj2}
There is a constant $C$ such that $va^{\equiv}(G)\leq C$ for every planar graph $G$.
\end{conj}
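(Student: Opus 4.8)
The plan is to show that there is an absolute constant $C$ so that every planar graph $G$ on $n$ vertices admits an equitable tree-$k$-coloring for every $k\ge C$, which is exactly the assertion $va^{\equiv}(G)\le C$. Two reductions frame the problem. First, when $k\ge\lceil n/2\rceil$ every color class has at most two vertices and is automatically a forest, so only the range $C\le k<\lceil n/2\rceil$ is at issue; here the target class size $s=\lceil n/k\rceil$ may be large. Second, I will use planarity only through sparsity: every planar graph is $5$-degenerate, satisfies $|E(G)|\le 3n-6$ (so $\mathrm{mad}(G)<6$), and has vertex arboricity at most $3$. It is crucial that the constant come from sparsity and not from a rigid three-forest partition: splitting a fixed $V=V_1\cup V_2\cup V_3$ into near-equal induced subforests balances only when each $t_i\approx kn_i/n$ is attainable by an integer, which forces $k=\Omega(\sqrt n)$ (this is the very phenomenon behind $va^{\equiv}(K_{n,n})=\Theta(\sqrt n)$). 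The extra freedom of planar graphs is that color classes need not be independent sets but only acyclic, so vertices may be moved between classes far more liberally; it is this liberty that I aim to convert into a constant bound.

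The construction has two stages. In the first I produce some tree-$k$-coloring with a structural guarantee: fix a $5$-degeneracy order $v_1,\dots,v_n$ and color greedily so that each $v_i$ receives a color shared by at most one of its back-neighbors; since each vertex has at most five back-neighbors, three colors already suffice, and the coloring has the property that every class is a forest, because in the inherited order each vertex of a class has at most one earlier neighbor inside it and hence no class contains a cycle. In the second stage I balance the class sizes by a sequence of moves. Call the move of a vertex $v$ into a class $B$ \emph{admissible} if $v$ has at most one neighbor in each tree-component of $B$; an admissible move keeps $B$ (and trivially the source class) a forest. Starting from any over-full class I drive the coloring toward equitability by repeatedly performing admissible moves from heavier classes to lighter ones, measuring progress with the potential $\Phi=\sum_i|V_i|^2$, which strictly decreases whenever a vertex passes from a class of size $a$ to one of size $b<a-1$.

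The heart of the argument, and the step I expect to be the main obstacle, is guaranteeing that an admissible balancing move—or a short admissible chain of moves, in the style of the Hajnal--Szemer\'edi exchange—always exists while the coloring is not yet equitable. If every class had bounded degree this would follow from routine counting, as a low-degree vertex blocks only a few destinations; but in a planar graph the degree is unbounded, and one high-degree vertex can meet many classes and block every admissible destination. I would address this in two parts. First, high-degree vertices are scarce: at most $O(n/D)$ vertices have degree exceeding $D$, so a constant fraction of each over-full class has bounded degree, and I aim to show by discharging—charging each blocked destination to an edge of $G$ and invoking $\mathrm{mad}(G)<6$—that at least one such vertex admits an admissible move to a deficient class. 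Second, the genuinely high-degree vertices will be pinned first, each placed in a class where its neighborhood is independent (a star), which is feasible because its neighbors, living in a sparse graph, can be spread over the remaining classes. Making these mechanisms cooperate, so that pinning the few high-degree vertices never obstructs the discharging that relocates the many low-degree ones, is the delicate point; I expect it to fix the value of $C$, which the sparsity bounds should keep an absolute constant rather than a function of $n$ or $\Delta(G)$.
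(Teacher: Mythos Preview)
The paper does not contain a proof of this statement. Conjecture~\ref{conj2} is stated and then immediately attributed to Esperet, Lemoine and Maffray~\cite{ELM.2015}, who confirmed it with $C=4$; no argument is reproduced here. There is therefore no in-paper proof against which to compare your proposal.

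Judged on its own terms, your proposal has a genuine gap exactly where you locate it. The first stage is sound: greedy coloring along a $5$-degeneracy order, assigning each vertex a color shared by at most one back-neighbor, does yield a tree-$k$-coloring for every $k\ge 3$. The second stage, however, is only a plan. Your admissibility criterion and the potential $\Phi=\sum_i|V_i|^2$ are fine, but you never establish that an admissible move from an over-full class to a deficient one exists whenever the coloring is inequitable. The discharging step is not specified: a class $B$ is blocked for $v$ by a \emph{pair} of edges from $v$ into a single component of $G[B]$ together with the path in $G[B]$ joining their endpoints, and you do not say which edge receives the charge or why the total charge is controlled by $\mathrm{mad}(G)<6$. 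Worse, your two mechanisms conflict in precisely the way you anticipate: pinning a high-degree vertex constrains the placement of its many neighbors, and those constraints propagate to the low-degree vertices you later wish to relocate. Hajnal--Szemer\'edi exchanges are already delicate for proper colorings, where an obstruction is a single edge; here the obstruction is a path of arbitrary length inside the target class, a non-local object that neither $5$-degeneracy nor $\mathrm{mad}<6$ bounds. Until you produce a concrete invariant that both survives your moves and forces an admissible move into some deficient class, this remains a strategy, not a proof.
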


In 2015, Esperet, Lemoine and Maffray \cite{ELM.2015} confirmed Conjecture \ref{conj2} by showing that
$va^{\equiv}(G)\leq 4$ for every planar graph $G$. Recently, Niu, Zhang and Gao \cite{NZG} proved that $va^{\equiv}(G)\leq 8$ for every IC-planar graph $G$ (a graph is \emph{IC-planar} if it has embedding in the plane so that each edge is crossed by at most one other edge and each vertex is incident with at most one crossing edge).

For Conjecture \ref{conj1}, it is still widely open, and there are some partial results in the literature. For example, Zhang \cite{Z.2016} verified it for subcubic graphs and Chen et al.\,\cite{CGSWW.2017} confirmed it for 5-degenerate graphs.

In many papers, including \cite{CGSWW.2017,Z.2015,Z.2016}, the authors announced that Conjecture \ref{conj1} has been confirmed for graphs $G$ with $\Delta(G)\geq |G|/2$ by Zhang and Wu \cite{ZW.2014}. However, one can look into that paper and then find that Zhang and Wu just proved a weaker result that $va^{=}(G)\leq \big\lceil(\Delta(G)+1)/2\big\rceil$ for every graph $G$ with $\Delta(G)\geq |G|/2$, and their result (even their proof) cannot implies $va^{\equiv}(G)\leq \big\lceil(\Delta(G)+1)/2\big\rceil$ for such a graph $G$. This motivates us to write this paper to give a detailed proof of the following theorem, which confirms Conjecture \ref{conj1} for graphs $G$ with $\Delta(G)\geq (|G|-1)/2$.

\begin{thm}\label{EVAT}
If $G$ is a graph with $\Delta(G)\geq\frac{|G|-1}{2}$ and $k\geq\big\lceil\frac{\Delta(G)+1}{2}\big\rceil$ is an integer, then $V(G)$ can be equitably partitioned into $k$ subsets so that each of them induces a linear forest.
\end{thm}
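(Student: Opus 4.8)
The plan is to turn the two hypotheses into a bound on the sizes of the colour classes and then run a minimal-counterexample argument over equitable partitions. Write $n=|G|$. The assumption $k\geq\lceil(\Delta(G)+1)/2\rceil$ gives $\Delta(G)\leq 2k-1$, while $\Delta(G)\geq(n-1)/2$ gives $n\leq 2\Delta(G)+1\leq 4k-1$. Hence in \emph{any} equitable $k$-partition every class has size $\lfloor n/k\rfloor$ or $\lceil n/k\rceil$, both at most $4$ (indeed $\lfloor n/k\rfloor\leq 3$). If $\lceil n/k\rceil\leq 2$ every class has at most two vertices and automatically induces a linear forest, so I may assume the admissible class sizes lie in $\{2,3\}$ or in $\{3,4\}$. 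A class of size at most $4$ fails to induce a linear forest exactly when it contains a triangle, a $4$-cycle, or a vertex with three neighbours inside the class; enumerating the six such graphs on at most four vertices, one checks the key structural fact that \emph{every} bad class except $K_{4}$ becomes a linear forest after the deletion of one suitably chosen vertex (the centre of a star, a triangle vertex of a paw or of $K_{3}+K_{1}$, a degree-$3$ vertex of a diamond, any vertex of $C_{4}$ or of a triangle), whereas deleting any vertex of $K_{4}$ leaves a triangle.

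Next I would fix the size profile determined by $n$ and $k$ and, among all equitable $k$-partitions realising it, select a partition $\mathcal P=(V_{1},\dots,V_{k})$ that is lexicographically smallest with respect to the pair $\big(b(\mathcal P),\ \sum_{i}e(G[V_{i}])\big)$, where $b(\mathcal P)$ counts the classes not inducing a linear forest and $e(G[V_{i}])$ is the number of edges inside $V_{i}$. The goal is to show $b(\mathcal P)=0$. Assuming a class $V_{i}$ is bad, I would repair it by a size-preserving local modification — relocating one vertex of $V_{i}$ to a class of the complementary admissible size when $V_{i}$ is a largest class, or performing a one-for-one swap with a larger class when $V_{i}$ is a smallest class — chosen so that the affected classes induce linear forests and the lexicographic potential strictly drops, contradicting minimality.

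The engine is a counting step powered by $\Delta(G)\leq 2k-1$. Choose the vertex $v$ whose deletion already turns $V_{i}$ into a linear forest; in each non-$K_{4}$ type this $v$ has at least two neighbours inside $V_{i}$, hence at most $2k-3$ neighbours outside it, so at most $k-2$ of the remaining $k-1$ classes contain two or more neighbours of $v$. Thus at least one further class $V_{j}$ meets $v$ in at most one vertex, and into such a $V_{j}$ the vertex $v$ can be inserted as an isolated vertex or a pendant leaf without creating a degree-$3$ vertex or a cycle; the few cases where the unique neighbour of $v$ in $V_{j}$ is an interior path vertex are avoided by a marginally more careful choice of $V_{j}$. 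Matching this target to the size dictated by the profile (so that the relocation or the one-for-one swap preserves equitability) is handled by the same count, now restricted to classes of the required size, where the inequality $n\leq 4k-1$ guarantees enough small classes to host $v$.

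The main obstacle is the single resistant type, a class isomorphic to $K_{4}$: deleting one of its vertices leaves a triangle, so no single-vertex move repairs it, and I expect to need a genuine two-for-two exchange with another class. The difficulty is to secure \emph{simultaneously} that the receiving class absorbs both incoming vertices as a linear forest, that the two vertices returned to $V_{i}$ leave it a linear forest, and that no previously good class is spoiled. Since $\Delta(G)\leq 2k-1$ is essentially tight against four mutually adjacent vertices, the count that produces two independent, compatible targets is delicate, and I anticipate the bulk of the work to be a case analysis organised by the isomorphism type of the bad class, selecting the relocated vertices and their targets accordingly.
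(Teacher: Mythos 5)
Your proposal is a plan with two load-bearing steps left unproven, and at least one of them fails as stated. First, the relocation step. Your count from $\Delta(G)\leq 2k-1$ is correct but gives you almost nothing to spare: if $v$ has at least two neighbours inside $V_i$, then it has at most $2k-3$ neighbours in the other $k-1$ classes, so the number $t$ of classes containing two or more neighbours of $v$ satisfies $2t\leq 2k-3$, i.e.\ $t\leq k-2$, and you are guaranteed exactly \emph{one} class $V_j$ meeting $N(v)$ in at most one vertex. There is therefore no room for the ``marginally more careful choice of $V_j$'' you invoke when the unique neighbour of $v$ in $V_j$ is an interior vertex of a path (in which case inserting $v$ creates a degree-$3$ vertex and the move fails). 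Worse, equitability forces $V_j$ to have a prescribed size, and in the extremal case $|G|=4k-1$ (which your own bound allows) there are $k-1$ classes of size $4$ and exactly \emph{one} class of size $3$; so your claim that ``$n\leq 4k-1$ guarantees enough small classes to host $v$'' is false precisely where the theorem is tight, and you are forced into one-for-one swaps, for which you never control the adjacencies of the vertex $u$ being returned to $V_i$. Second, the $K_4$ case: you explicitly say you ``anticipate the bulk of the work'' to be the two-for-two exchange analysis and do not carry it out. Since $\Delta(G)\leq 2k-1$ is essentially tight against four mutually adjacent vertices, this is exactly the hard core of your approach, and leaving it open means the statement is not proved.

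For comparison, the paper avoids repair arguments entirely by working in the complement. From $\Delta(G)\geq\frac{|G|-1}{2}$ one gets $\delta(G^c)\leq\frac{|G|-1}{2}$, and from $k\geq\big\lceil\frac{\Delta(G)+1}{2}\big\rceil$ one gets $\delta(G^c)=|G|-1-\Delta(G)\geq |G|-2k$; elementary lemmas (longest-path and shortest-cycle arguments) then produce in $G^c$ either a matching of size $|G|-2k$ (when classes have size $2$ or $3$) or two disjoint paths on $\delta(G^c)+1$ and $\delta(G^c)$ vertices (when classes have size $3$ or $4$). The partition is then read off directly: a pair of vertices adjacent in $G^c$ plus one arbitrary vertex induces at most a path in $G$, and four consecutive vertices of a path in $G^c$ induce in $G$ a subgraph of the complement of $P_4$, which is again $P_4$, hence a linear forest. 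This constructive route never needs to certify that a modification preserves previously good classes, which is exactly where your minimal-counterexample scheme gets stuck. If you want to salvage your approach, the missing ingredients are (i) a stronger count (or a secondary extremal criterion) producing more than one admissible target class, and (ii) a complete treatment of the $K_4$ exchange; as written, neither is available.
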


Actually, Theorem \ref{EVAT} implies the following

\begin{thm}\label{ELVA}
$lva^{\equiv}(G)\leq \big\lceil\frac{\Delta(G)+1}{2}\big\rceil$ for graphs $G$ with $\Delta(G)\geq\frac{|G|-1}{2}$.
\end{thm}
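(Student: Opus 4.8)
The plan is to read Theorem \ref{ELVA} off directly from the definition of the equitable linear vertex arborable threshold together with Theorem \ref{EVAT}, so that essentially no new combinatorial work is required beyond a careful unwinding of the quantifiers.

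First I would recall that a partition of $V(G)$ into $k$ subsets, each inducing a linear forest, with the sizes pairwise differing by at most one, is precisely an equitable path-$k$-coloring of $G$: the $k$ color classes are exactly the $k$ subsets. Thus the conclusion of Theorem \ref{EVAT} asserts exactly that $G$ admits an equitable path-$k$-coloring.

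Next I would exploit the fact that Theorem \ref{EVAT} is quantified universally over $k$: it produces such a coloring for \emph{every} integer $k\geq\big\lceil\frac{\Delta(G)+1}{2}\big\rceil$. By definition, $lva^{\equiv}(G)$ is the least integer $k$ for which $G$ has an equitable path-$k'$-coloring for every integer $k'\geq k$. Since Theorem \ref{EVAT} exhibits such colorings for all $k'\geq\big\lceil\frac{\Delta(G)+1}{2}\big\rceil$, the integer $k=\big\lceil\frac{\Delta(G)+1}{2}\big\rceil$ satisfies the defining condition, whence $lva^{\equiv}(G)\leq\big\lceil\frac{\Delta(G)+1}{2}\big\rceil$.

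The only point that requires any care — and the sole reason the deduction is not completely formal — is matching the universal quantifier in Theorem \ref{EVAT} to the ``for every $k'\geq k$'' in the definition of $lva^{\equiv}$. This is precisely the distinction the authors emphasize in the introduction: knowing only that an equitable path-coloring exists for the single value $k=\big\lceil\frac{\Delta(G)+1}{2}\big\rceil$ (i.e.\ a bound on $lva^{=}$) would not bound the threshold $lva^{\equiv}$, whereas Theorem \ref{EVAT} delivers the required uniform-in-$k$ family of colorings. Hence there is no genuine obstacle here: all the substance lives in Theorem \ref{EVAT}, and once its conclusion is stated uniformly over $k$ the threshold bound is immediate.
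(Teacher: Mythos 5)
Your deduction is correct and is exactly what the paper does: the paper offers no separate proof of Theorem \ref{ELVA}, simply noting that Theorem \ref{EVAT} ``implies'' it, since Theorem \ref{EVAT} provides an equitable path-$k'$-coloring for \emph{every} integer $k'\geq\big\lceil\frac{\Delta(G)+1}{2}\big\rceil$, which is precisely the defining condition for $lva^{\equiv}(G)\leq\big\lceil\frac{\Delta(G)+1}{2}\big\rceil$. Your emphasis on the quantifier issue (a bound for a single $k$ would only control $lva^{=}$, not the threshold) is the same point the authors themselves stress when criticizing the earlier literature.
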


Since the complete graph $K_n$ satisfies that $\Delta(K_n)=n-1\geq |K_n|/2$ and $lva^{\equiv}(K_n)= \big\lceil n/2\big\rceil=\big\lceil(\Delta(G)+1)/2\big\rceil$, the lower bound for $k$ in Theorem \ref{EVAT} and the upper bound for $lva^{\equiv}(G)$ in Theorem \ref{ELVA} are sharp in this sense.

The proof of Theorem \ref{EVAT} will be given in Section \ref{sec:2}. In Section \ref{sec:3} , we will give a slightly stronger result that omits the condition $\Delta(G)\geq (|G|-1)/2$ in Theorem \ref{ELVA} but replaces the upper bound for $lva^{\equiv}(G)$ with $\max\{\big\lceil(\Delta(G)+1)/2\big\rceil,\big\lceil |G|/4 \big\rceil\}$.

\vspace{3mm}\noindent \textbf{Notations:} we use standard notations that come from the book on Graph Theory contributed by Bondy and Murty \cite{Bondy.2008}. In the next section there are two notations $\alpha'(G)$ and $G^c$ that are frequently used.  They respectively denote the largest size of the matching in the graph $G$ and the completement graph of $G$. 

\section{A constructive proof of Theorem \ref{EVAT}}\label{sec:2}

In order to give the proof of Theorem \ref{EVAT}, we collect some useful lemmas concerning the structure of a graph. For convenience, we list them here in advance.

\begin{lem}
\label{lem-exe}
If $G$ is a connected graph with minimum degree $\delta\leq \frac{|G|-1}{2}$, then $G$ contains a path of length $2\delta$.
\end{lem}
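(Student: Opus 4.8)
The plan is to extract a \emph{longest} path $P = v_0 v_1 \cdots v_\ell$ in $G$ and to show that its length $\ell$ must be at least $2\delta$; a path of length exactly $2\delta$ is then a subpath of $P$. First I would record the standard consequence of maximality: since $P$ cannot be extended at either endpoint, every neighbour of $v_0$ and every neighbour of $v_\ell$ already lies on $P$. Writing $S = \{i : v_0 v_i \in E(G)\} \subseteq \{1, \dots, \ell\}$ and $T = \{i : v_\ell v_i \in E(G)\} \subseteq \{0, \dots, \ell-1\}$, the minimum degree hypothesis gives $|S| \geq \delta$ and $|T| \geq \delta$.

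Next I would assume, for contradiction, that $\ell \leq 2\delta - 1$ and use a rotation/pigeonhole argument to close $P$ into a cycle on the same vertex set. Shifting $S$ down by one, put $S' = \{i-1 : i \in S\} \subseteq \{0, \dots, \ell-1\}$, so $|S'| = |S| \geq \delta$. Then $|S'| + |T| \geq 2\delta > \ell = |\{0, \dots, \ell-1\}|$, whence $S' \cap T \neq \emptyset$: there is an index $i$ with $i \in T$ and $i+1 \in S$. Using the chords $v_i v_\ell$ and $v_{i+1} v_0$ together with the path edges, this produces the cycle $C = v_0 v_1 \cdots v_i v_\ell v_{\ell-1} \cdots v_{i+1} v_0$ through all $\ell + 1$ vertices of $P$.

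Finally I would invoke connectivity together with the hypothesis $\delta \leq (|G|-1)/2$. Since $|V(C)| = \ell + 1 \leq 2\delta \leq |G| - 1 < |G|$, the cycle $C$ must omit a vertex; by connectedness some $w \notin V(C)$ is adjacent to a vertex of $C$, and splicing $w$ onto $C$ (breaking the cycle at that vertex) yields a path on $\ell + 2$ vertices, contradicting the maximality of $P$. Hence $\ell \geq 2\delta$, as required.

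The main obstacle is the middle step, namely exhibiting the spanning cycle of $V(P)$. The delicate point is that the pigeonhole comparison takes place in the index set $\{0, \dots, \ell-1\}$ of size exactly $\ell$, so the assumption $\ell \leq 2\delta - 1$ is precisely what forces $|S'| + |T| > \ell$ and hence the overlap; I would also have to check that the degenerate choices of $i$ (for instance $i = 0$, corresponding to the edge $v_0 v_\ell$) still describe a genuine cycle through all of $V(P)$. Note that the hypothesis $\delta \leq (|G|-1)/2$ plays no role in \emph{building} the cycle; it enters only at the very end, guaranteeing $|V(C)| < |G|$ so that connectivity leaves a vertex outside $C$ to extend along, which is exactly what yields the contradiction.
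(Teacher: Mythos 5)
Your proof is correct and follows essentially the same route as the paper's own argument: take a longest path, assume its length is at most $2\delta-1$, use the same pigeonhole count on endpoint-neighbour indices (your shifted set $S'$ is exactly the paper's set $S=\{i : x_0x_{i+1}\in E(G)\}$) to find a chord pair closing the path into a cycle on all its vertices, and then use connectivity together with $\delta\leq\frac{|G|-1}{2}$ to extend past the cycle for a contradiction. The only cosmetic difference is that the paper phrases the count as $|S|+|T|=|S\cup T|+|S\cap T|\leq k+|S\cap T|$ rather than your direct pigeonhole, so there is nothing substantive to distinguish the two.
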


\begin{proof}
Let $P=x_0x_1\cdots x_k$ be the longest path of $G$. It is sufficient to prove that $k\geq 2\delta$ and thus the required path is contained in $P$. Suppose, to the contrary, that $k\leq 2\delta-1$. Since $P$ is the longest path, the neighbors of $x_0$ or $x_k$ are all on $P$. Let $S=\{i~|~x_0x_{i+1}\in E(G), 0\leq i\leq k-1\}$ and let $T=\{i~|~x_ix_k\in E(G), 0\leq i\leq k-1\}$.
It is clear that $2\delta\leq d_G(x_0)+d_G(x_k)=|S|+|T|=|S\cup T|+|S\cap T|\leq k+|S\cap T|$, which implies that $|S\cap T|\geq 2\delta-k\geq 1$. Suppose
$j\in S\cap T$. It follows that $x_0x_{j+1},x_jx_k\in E(G)$ and thus there is a cycle $C$ on $k+1$ vertices, say $x_0x_{j+1}x_{j+2}\cdots x_kx_jx_{j-1}\cdots x_1x_0$. Since $G$ is connected and $|G|\geq 2\delta+1\geq k+2$, outside the cycle $C$ there is a vertex $y$ that connects to some vertex $x_r$ of $C$, where $0\leq r\leq k$. In this case, one can immediately find a path on $k+2$ vertices from the graph induced by $E(C)\cup \{yx_r\}$, contradicting the assumption that $P$ is the longest path in $G$.
\end{proof}

\begin{lem}
\label{lem2}
If $G$ is a connected graph such that $|G|>2\delta(G)$, then $\alpha'(G)\geq\delta(G)$.
\end{lem}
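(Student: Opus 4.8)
The plan is to reduce this to Lemma~\ref{lem-exe}, which is already available. First I would observe that the hypothesis $|G|>2\delta(G)$, read among integers, is exactly $|G|\geq 2\delta(G)+1$, which rearranges to $\delta(G)\leq\frac{|G|-1}{2}$. Thus the minimum-degree condition required by Lemma~\ref{lem-exe} is satisfied, and since $G$ is connected the lemma applies directly.

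Applying Lemma~\ref{lem-exe} yields a path $P=x_0x_1\cdots x_{2\delta}$ of length $2\delta$ in $G$, where $\delta=\delta(G)$; such a path has $2\delta$ edges and $2\delta+1$ vertices. The key step is then to extract a matching from $P$: selecting the alternate edges $x_0x_1,\,x_2x_3,\,\dots,\,x_{2\delta-2}x_{2\delta-1}$ gives a set of $\delta$ pairwise nonadjacent edges, hence a matching of size $\delta$ in $G$. This immediately gives $\alpha'(G)\geq\delta=\delta(G)$, as required.

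I do not anticipate a serious obstacle here, since the statement is essentially a corollary of Lemma~\ref{lem-exe}; the only points needing care are (i) checking that the strict inequality $|G|>2\delta(G)$ indeed delivers the (non-strict) hypothesis $\delta(G)\leq\frac{|G|-1}{2}$ of Lemma~\ref{lem-exe}, and (ii) confirming that a path of length $2\delta$, as opposed to $2\delta-1$, is exactly what makes the alternating edges number precisely $\delta$. Both are routine parity and counting verifications, so the argument should be short.
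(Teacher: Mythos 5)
Your proof is correct and follows essentially the same route as the paper: apply Lemma~\ref{lem-exe} (whose hypothesis $\delta(G)\leq\frac{|G|-1}{2}$ follows from $|G|>2\delta(G)$ by integrality) to get a path of length $2\delta(G)$, and then take alternate edges to form a matching of size $\delta(G)$. The only difference is that you spell out the integrality check and the edge count explicitly, which the paper leaves implicit.
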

\begin{proof}
By Lemma \ref{lem-exe}, $G$ contains a path $P=x_0x_1\cdots x_{2\delta(G)}$ of length $2\delta(G)$. Hence there exists a matching $\{x_0x_1,x_2x_3,\cdots,x_{2\delta(G)-2}x_{2\delta(G)-1}\}$ of size $\delta(G)$, which implies that $\alpha'(G)\geq\delta(G)$.
\end{proof}

\begin{lem}
\label{lem3}
If $G$ is a graph with $\delta(G)\geq2$, then $G$ contains a cycle of length at least $\delta(G)+1$.
\end{lem}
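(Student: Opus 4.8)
The plan is to run the standard longest-path argument, the same machinery already used in the proof of Lemma \ref{lem-exe}, but now exploiting the minimum-degree hypothesis $\delta(G)\geq 2$ to close up a long cycle rather than to extract a matching. First I would fix a longest path $P=x_0x_1\cdots x_k$ in $G$; if $G$ is disconnected one simply takes a longest path over the whole graph, which automatically lies inside a single component. The maximality of $P$ forces every neighbor of the endpoint $x_0$ to lie on $P$, since a neighbor $y\notin V(P)$ would yield the strictly longer path $yx_0x_1\cdots x_k$, a contradiction. Thus all $d_G(x_0)\geq\delta(G)$ neighbors of $x_0$ sit among $x_1,x_2,\dots,x_k$.

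Next I would single out the neighbor of $x_0$ that is farthest along $P$: let $x_t$ be the neighbor of $x_0$ with the largest index $t$. By the choice of $x_t$, every neighbor of $x_0$ has index in $\{1,2,\dots,t\}$, and since $x_0$ has at least $\delta(G)$ distinct neighbors there, a counting observation gives $t\geq\delta(G)$ (there are at most $t$ vertices to host at least $\delta(G)$ distinct neighbors). Finally, the edge $x_0x_t$ closes the subpath $x_0x_1\cdots x_t$ into a cycle $C=x_0x_1\cdots x_tx_0$ of length $t+1\geq\delta(G)+1$, which is exactly the cycle asserted by the lemma.

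I do not anticipate a genuine obstacle here, as the whole argument is a short Dirac-type observation. The only two points that deserve care are, first, the counting step establishing $t\geq\delta(G)$, where one must verify both that all neighbors of $x_0$ really do lie in the initial segment $x_1,\dots,x_t$ (guaranteed by taking $x_t$ as the farthest neighbor) and that these neighbors are genuinely distinct; and second, the fact that $C$ is a true cycle rather than a degenerate retracing of the path edge $x_0x_1$. This second point is precisely where the hypothesis $\delta(G)\geq 2$ enters: it forces $x_0$ to have a neighbor besides $x_1$, hence $t\geq\delta(G)\geq 2$, so that $x_t\neq x_1$ and the chord $x_0x_t$ together with the subpath produces a cycle of length at least three.
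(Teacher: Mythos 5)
Your proof is correct and follows essentially the same route as the paper: take a longest path $P=x_0x_1\cdots x_k$, note that all neighbors of $x_0$ lie on $P$, pick the neighbor $x_t$ of largest index so that $t\geq d_G(x_0)\geq\delta(G)$, and close the cycle $x_0x_1\cdots x_tx_0$ of length $t+1\geq\delta(G)+1$. Your counting step is in fact stated more carefully than the paper's (which loosely claims $t$ equals the degree of $x_0$, when only $t\geq d_G(x_0)$ is needed or true), and your explicit remark on why $\delta(G)\geq 2$ prevents a degenerate cycle matches the paper's use of that hypothesis.
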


\begin{proof}
Let $P=x_0x_1\cdots x_k$ be the longest path of $G$. It is clear that all neighbors of $x_0$ are on $P$.  Let $x_i$ be a neighbor of $x_0$ so that $i$ is maximum (actually $i$ is exactly the degree of $v_0$ in $G$, and thus is at least $\delta(G)$). Since $\delta(G)\geq 2$, $C=x_0x_1\ldots x_ix_0$ is a cycle of length $i+1\geq \delta(G)+1$, as required.
\end{proof}

\begin{lem}
\label{lem1}
If $G$ is a disconnected graph, then $\alpha'(G)\geq\delta(G)$.
\end{lem}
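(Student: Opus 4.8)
The plan is to reduce the statement to a count of edges coming from two different components and to feed each component into Lemma \ref{lem3}. First I would dispose of the two degenerate cases. If $\delta(G)=0$ the claim $\alpha'(G)\ge 0$ is vacuous, and if $\delta(G)=1$ then every vertex has a neighbour, so $G$ has at least one edge and $\alpha'(G)\ge 1=\delta(G)$. Hence I may assume $\delta:=\delta(G)\ge 2$ for the rest of the argument, which is exactly the regime in which Lemma \ref{lem3} is available.

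The key observation is that disconnectedness hands us two vertex-disjoint pieces to work with. Let $C_1$ and $C_2$ be two distinct connected components of $G$. Since a component contains all $G$-neighbours of each of its vertices, we have $d_{C_i}(v)=d_G(v)\ge\delta$ for every $v\in C_i$, so $\delta(C_i)\ge\delta\ge 2$. Applying Lemma \ref{lem3} inside each $C_i$ produces a cycle of length at least $\delta(C_i)+1\ge\delta+1$; taking alternate edges of such a cycle gives a matching of size at least $\lfloor(\delta+1)/2\rfloor$. Because $C_1$ and $C_2$ share no vertices, the union of these two matchings is again a matching of $G$, so $\alpha'(G)\ge 2\lfloor(\delta+1)/2\rfloor$. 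A one-line parity check finishes the proof: if $\delta$ is even then $2\lfloor(\delta+1)/2\rfloor=\delta$, and if $\delta$ is odd then $2\lfloor(\delta+1)/2\rfloor=\delta+1\ge\delta$, so in either case $\alpha'(G)\ge\delta$.

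The part that deserves the most care is the accounting in this last step: a single component only guarantees about $\delta/2$ matching edges via Lemma \ref{lem3}, so the argument genuinely relies on the fact that a disconnected graph supplies \emph{two} components whose cycle-matchings are automatically disjoint, and the factor of two is precisely what cancels the loss incurred by passing from a cycle to a matching. I would also flag two minor points. First, isolating $\delta\le 1$ at the outset is what lets me invoke Lemma \ref{lem3} without worrying about its hypothesis $\delta\ge 2$. Second, if one prefers, a component $C$ with $|C|>2\delta(C)$ can instead be handled directly by Lemma \ref{lem2}, which already yields $\alpha'(C)\ge\delta(C)\ge\delta$ from that single component; but since Lemma \ref{lem3} applies to every component regardless of its order, this alternative is not needed and the two-component argument above suffices uniformly.
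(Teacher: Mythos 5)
Your proof is correct and takes essentially the same approach as the paper's: dispose of $\delta(G)\le 1$ trivially, apply Lemma \ref{lem3} to two distinct components to obtain vertex-disjoint long cycles, and combine matchings taken from those cycles. The only difference is bookkeeping --- the paper takes $\lfloor(\delta+1)/2\rfloor$ edges from one cycle and $\lfloor\delta/2\rfloor$ from the other to total exactly $\delta$, whereas you take $\lfloor(\delta+1)/2\rfloor$ from each and close with a parity check.
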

\begin{proof}
If $\delta(G)\leq 1$, then there is nothing to prove. Hence we assume $\delta(G)\geq 2$.
Let $G_1$ and $G_2$ be two components of $G$. It follows that $\delta(G_1),\delta(G_2)\geq \delta(G)\geq 2$. By Lemma \ref{lem3}, $G_1$ or $G_2$ contains a cycle $C_1=x_0x_1\cdots x_rx_0$ or $C_2=y_0y_1\cdots y_sy_0$ with $r\geq \delta(G)$ or $s\geq \delta(G)$, respectively. Under this condition, we can construct a matching
$$\{x_0x_1,x_2x_3,\cdots,x_{2\lfloor(\delta(G)+1)/2\rfloor-2}x_{2\lfloor(\delta(G)+1)/2\rfloor-1},
y_0y_1,y_2y_3,\cdots,y_{2\lfloor\delta(G)/2\rfloor-2}y_{2\lfloor\delta(G)/2\rfloor-1}\}$$
of size $\lfloor\frac{\delta(G)+1}{2}\rfloor+\lfloor\frac{\delta(G)}{2}\rfloor=\delta(G)$, which implies $\alpha'(G)\geq\delta(G)$.
\end{proof}

Combining Lemma \ref{lem-exe} with Lemma \ref{lem3}, we immediately have the following

\begin{lem}\label{lem-use}
If $G$ is a graph with $2\leq \delta(G)\leq \frac{|G|-1}{2}$, then $G$ contains two vertex-disjoint paths $P_1$ and $P_2$ such that $|P_1|=\delta(G)+1$ and $|P_2|=\delta(G)$.
\end{lem}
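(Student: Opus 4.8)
The plan is to split on whether $G$ is connected, extracting the two required paths from a single long path in the connected case and from two disjoint long cycles in the disconnected case. Write $\delta=\delta(G)$ for brevity; the goal in either case is a pair of vertex-disjoint paths on $\delta+1$ and $\delta$ vertices, consuming $2\delta+1$ vertices in total. First suppose $G$ is connected. Since $\delta\leq\frac{|G|-1}{2}$, Lemma \ref{lem-exe} applies and furnishes a path $Q=x_0x_1\cdots x_{2\delta}$ of length $2\delta$, that is, on $2\delta+1$ vertices. I would then simply cut $Q$ into its initial segment $P_1=x_0x_1\cdots x_{\delta}$ on $\delta+1$ vertices and its terminal segment $P_2=x_{\delta+1}x_{\delta+2}\cdots x_{2\delta}$ on $\delta$ vertices. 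These are subpaths of $Q$ sharing no vertex, so they form the desired vertex-disjoint pair.

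Now suppose $G$ is disconnected, and let $G_1,G_2$ be two of its components. Every vertex retains all its neighbors inside its own component, so $\delta(G_i)\geq\delta(G)\geq 2$ for $i=1,2$. Note this case does not invoke the degree hypothesis $\delta\leq\frac{|G|-1}{2}$, and in fact cannot use it componentwise, since a component such as $K_{\delta+1}$ would violate $\delta(G_i)\leq\frac{|G_i|-1}{2}$. By Lemma \ref{lem3}, $G_1$ contains a cycle of length at least $\delta+1$; deleting one edge and truncating yields a path $P_1$ on exactly $\delta+1$ vertices. Likewise $G_2$ contains a cycle of length at least $\delta+1>\delta$, which yields a path $P_2$ on $\delta$ vertices. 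Since $P_1\subseteq G_1$ and $P_2\subseteq G_2$ lie in distinct components, they are automatically vertex-disjoint.

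I expect no single step to be a genuine obstacle, which is why the statement is billed as following ``immediately.'' The only point demanding care is recognizing that the connected and disconnected cases require different tools: Lemma \ref{lem-exe} is unavailable in the disconnected setting because its degree hypothesis can fail on an individual component, while Lemma \ref{lem3} alone need not produce a structure on $2\delta+1$ vertices in a connected graph. Matching each case to the right lemma, and keeping the bookkeeping between \emph{length} (number of edges) and \emph{number of vertices} straight, is the entire content of the argument.
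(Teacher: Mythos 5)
Your proposal is correct and follows essentially the same route as the paper's own proof: split on connectivity, apply Lemma \ref{lem-exe} to cut a path of length $2\delta$ into the two required pieces when $G$ is connected, and apply Lemma \ref{lem3} to two components to extract the paths from disjoint cycles when $G$ is disconnected. Your added remark about why Lemma \ref{lem-exe} cannot be applied componentwise (e.g.\ a component isomorphic to $K_{\delta+1}$) is a correct observation that the paper leaves implicit.
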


\begin{proof}
If $G$ is connected, then by Lemma \ref{lem-exe}, $G$ contains a path of length $2\delta(G)$, which can be split  into the required two vertex-disjoint paths. If $G$ is disconnected, then $G$ contains at least two components $G_1$ and $G_2$, and the minimum degree of $G_1$ and $G_2$ are both at least $\delta(G)\geq 2$.
By Lemma \ref{lem3}, there are cycles $C_1\subseteq G_1$ and $C_2\subseteq G_2$ of length at least $\delta(G)+1$. Clearly, we can choose $P_1\subseteq C_1$ and $P_2\subseteq C_2$ such that $|P_1|=\delta(G)+1$ and $|P_2|=\delta(G)$, as required.
\end{proof}

We are ready to prove Theorem \ref{EVAT}. Note that $V(G)$ can be equitably partitioned into $k$ subsets if and only if $V(G)$ can be partitioned into $k$ subsets  so that each subset contains either $\big\lfloor\frac{|G|}{k}\big\rfloor$ or $\big\lceil\frac{|G|}{k}\big\rceil$ vertices. We spit the proof into three parts according to the value of $k$.

\vspace{3mm}\textbf{Case 1.}  $k\geq\frac{|G|}{2}$.\vspace{3mm}

In this case, we have $$\bigg\lceil\frac{|G|}{k}\bigg\rceil\leq 2.$$ Hence we arbitrarily partition $V(G)$ into $k$ subsets so that each subset consists of one or two vertices (and thus induces a linear forest), as required.

\vspace{3mm}\textbf{Case 2.}  $\frac{|G|}{3}\leq k<\frac{|G|}{2}$.\vspace{3mm}

In this case, we have
$$2\leq \bigg\lfloor\frac{|G|}{k}\bigg\rfloor\leq \bigg\lceil\frac{|G|}{k}\bigg\rceil\leq 3.$$
In the following, we partition
$V(G)$ into $k$ subsets so that each subset contains two or three vertices.

Using $\Delta(G)+\delta(G^{c})=|G|-1$ and $\Delta(G)\geq\frac{|G|-1}{2}$, we deduce that $|G|=|G^{c}|\geq2\delta(G^{c})+1$. According to Lemmas \ref{lem2} and \ref{lem1}, we immediately have $\alpha':=\alpha'(G^{c})\geq\delta(G^{c})$, which implies the existence of a matching $M=\{x_{1}y_{1},\cdots,x_{\alpha'}y_{\alpha'}\}$ in $G^{c}$.

Since $k\geq\big\lceil\frac{\Delta(G)+1}{2}\big\rceil$, $$\alpha'\geq \delta(G^c)=|G|-(\Delta(G)+1)\geq |G|-2k.$$ Hence we can obtain a subset $M'=\{x_{1}y_{1},\cdots,x_{|G|-2k}y_{|G|-2k}\}$ of $M$. Let $z_1,z_2,\cdots,z_{|G|-2k}$ be distinct vertices in $V(G)\backslash  V(M')$ and let $U_i=\{x_i,y_i,z_i\}$ with $1\leq i\leq |G|-2k$. Clearly, each $U_i$ induces a linear forest in $G$. Since $|V(G)\backslash \bigcup_{i=1}^{|G|-2k}U_i|=|G|-3(|G|-2k)=6k-2|G|\geq 0$, we arbitrarily partition $V(G)\backslash \bigcup_{i=1}^{|G|-2k}U_i$ into $3k-|G|$ disjoint subsets $W_1,W_2,\cdots,W_{3k-|G|}$ so that each of them contains exactly two vertices. Note that  each $W_i$ induces a linear forest in $G$. Hence $$U_1,U_2,\cdots,U_{|G|-2k},W_1,W_2,\cdots,W_{3k-|G|}$$ is the desired partition of $V(G)$.

\vspace{3mm}\textbf{Case 3.} $\lceil\frac{\Delta(G)+1}{2}\rceil\leq k<\frac{|G|}{3}$\vspace{3mm}

In this case, we have
\begin{align}\label{eq1}
3\leq \bigg\lfloor\frac{|G|}{k}\bigg\rfloor\leq \bigg\lceil\frac{|G|}{k}\bigg\rceil   \leq \Bigg\lceil\frac{|G|}{\lceil\frac{\Delta(G)+1}{2}\rceil}\Bigg\rceil
\leq \Bigg\lceil\frac{|G|}{\frac{|G|+1}{4}}\Bigg\rceil      = 4.
\end{align}
Moreover, we have
\begin{align}\label{eq2}
|G|\leq 4k-1.
\end{align}
If not, then $|G|=4k$ by \eqref{eq1} and thus we have $\Delta(G)\geq\lceil\frac{|G|-1}{2}\rceil=2k$ (note that $\Delta(G)$ shall be an integer), which implies $|G|-(\Delta(G)+1)\leq 2k-1$. However, we have, on the other hand, that $|G|-(\Delta(G)+1)\geq |G|-2k=2k$, since $k\geq \lceil\frac{\Delta(G)+1}{2}\rceil\geq \frac{\Delta(G)+1}{2}$. This results in a contradiction.

In the following, we are to partition
$V(G)$ into $k$ subsets so that each subset contains three or four vertices.
Since $\Delta(G)+\delta(G^{c})=|G|-1$ and $\Delta(G)\geq\frac{|G|-1}{2}$,  $|G|=|G^{c}|\geq2\delta(G^{c})+1$. By Lemma \ref{lem-use}, $G^c$ contains two vertex-disjoint paths $P_1=x_0x_1\cdots x_{\delta}$ and $P_2=y_0y_1\cdots y_{\delta-1}$, where $\delta:=\delta(G^c)$.

Let $\beta=|G|-3k$ and $\mu=4k-|G|$. By \eqref{eq1} and \eqref{eq2}, $\beta,\mu\geq 1$. Since
$|G|-2k\leq |G|-(\Delta(G)+1)=\delta$, we conclude
\begin{align}\label{eq3}
2\beta+1\leq 2\beta+\mu\leq \delta.
\end{align}

Let $\rho=2\lceil\frac{\beta}{2}\rceil-\beta$ and let
\begin{align}
  \label{V1i} V^1_i&=\{x_{4i-4},x_{4i-3},x_{4i-2},x_{4i-1}\}, 1\leq i\leq \bigg\lceil\frac{\beta}{2}\bigg\rceil\\
  \label{U1i} U^1_i&=\{x_{2i},x_{2i+1}\}, 2\bigg\lceil\frac{\beta}{2}\bigg\rceil\leq i\leq 2\bigg\lceil\frac{\beta}{2}\bigg\rceil+\bigg\lfloor\frac{\mu+1}{2}\bigg\rfloor-\rho-1\\
  \label{V2i} V^2_i&=\{y_{4i-4},y_{4i-3},y_{4i-2},y_{4i-1}\}, 1\leq i\leq \bigg\lfloor\frac{\beta}{2}\bigg\rfloor\\
  \label{U2i} U^2_i&=\{y_{2i},y_{2i+1}\}, 2\bigg\lfloor\frac{\beta}{2}\bigg\rfloor\leq i\leq 2\bigg\lfloor\frac{\beta}{2}\bigg\rfloor+\bigg\lfloor\frac{\mu}{2}\bigg\rfloor+\rho-1
\end{align}
Note that $0\leq \rho\leq 1$ and the upper bound for $i$ in \eqref{U1i} or \eqref{U2i} may be less than its lower bound, in which case we naturally ignore the definition of $U^1_i$ or $U^2_i$, and also the definition of $W^1_i$ or $W^2_i$ that will be introduced later.

Since
$$4\bigg\lceil\frac{\beta}{2}\bigg\rceil-1\leq 4\cdot \frac{\beta+1}{2}-1=2\beta+1\leq \delta$$
$$2\bigg(2\bigg\lceil\frac{\beta}{2}\bigg\rceil+\bigg\lfloor\frac{\mu+1}{2}\bigg\rfloor-\rho-1\bigg)+1= 2\bigg(\bigg\lfloor\frac{\mu+1}{2}\bigg\rfloor+\beta-1\bigg)+1\leq 2\bigg(\frac{\mu+1}{2}+\beta-1\bigg)+1=2\beta+\mu\leq \delta$$
$$4\bigg\lfloor\frac{\beta}{2}\bigg\rfloor-1\leq 4\cdot \frac{\beta}{2}-1=2\beta-1\leq \delta-2< \delta-1$$
$$2\bigg(2\bigg\lfloor\frac{\beta}{2}\bigg\rfloor+\bigg\lfloor\frac{\mu}{2}\bigg\rfloor+\rho-1\bigg)+1= 2\bigg(\bigg\lfloor\frac{\mu}{2}\bigg\rfloor+\beta-1\bigg)+1\leq 2\bigg(\frac{\mu}{2}+\beta-1\bigg)+1=2\beta+\mu-1\leq \delta-1$$
by \eqref{eq3}, the vertex sets described by \eqref{V1i}-\eqref{U2i} are well-defined. Let $S$ be the set of vertices that are not belong
to any of the sets described by \eqref{V1i}-\eqref{U2i}.
Since $\lceil\frac{\beta}{2}\rceil+\lfloor\frac{\beta}{2}\rfloor=\beta$ and $\lfloor\frac{\mu+1}{2}\rfloor+\lfloor\frac{\mu}{2}\rfloor=\mu$,
$$|S|=|G|-4\bigg\lceil\frac{\beta}{2}\bigg\rceil-2\bigg( \bigg\lfloor\frac{\mu+1}{2}\bigg\rfloor-\rho\bigg)-4 \bigg\lfloor\frac{\beta}{2}\bigg\rfloor-2\bigg( \bigg\lfloor\frac{\mu}{2}\bigg\rfloor+\rho\bigg)
=|G|-4\beta-2\mu=\mu.$$
Let $S=\bigg\{z^1_i~\bigg|~2\bigg\lceil\frac{\beta}{2}\bigg\rceil\leq i\leq 2\bigg\lceil\frac{\beta}{2}\bigg\rceil+\bigg\lfloor\frac{\mu+1}{2}\bigg\rfloor-\rho-1\bigg\} \bigcup
\bigg\{z^2_i~\bigg|~2\bigg\lfloor\frac{\beta}{2}\bigg\rfloor\leq i\leq 2\bigg\lfloor\frac{\beta}{2}\bigg\rfloor+\bigg\lfloor\frac{\mu}{2}\bigg\rfloor+\rho-1\bigg\}$ and let
\begin{align*}
  W^1_i&=U^1_i\cup \{z^1_i\}, 2\bigg\lceil\frac{\beta}{2}\bigg\rceil\leq i\leq 2\bigg\lceil\frac{\beta}{2}\bigg\rceil+\bigg\lfloor\frac{\mu+1}{2}\bigg\rfloor-\rho-1\\
  W^2_i&=U^2_i\cup \{z^2_i\}, 2\bigg\lfloor\frac{\beta}{2}\bigg\rfloor\leq i\leq 2\bigg\lfloor\frac{\beta}{2}\bigg\rfloor+\bigg\lfloor\frac{\mu}{2}\bigg\rfloor+\rho-1.
\end{align*}
Since the graph induced by $V^1_i$ or $V^2_i$ or $W^1_i$ or $W^2_i$ induce a linear forest in $G$,
$$V^1_1, \cdots, V^1_{\lceil\beta/2\rceil},V^2_1, \cdots, V^2_{\lfloor\beta/2\rfloor},W^1_{2\lceil\beta/2\rceil},\cdots,W^1_{2\lceil\beta/2\rceil+\lfloor(\mu+1)/2\rfloor-\rho-1},W^2_{2\lfloor\beta/2\rfloor},\cdots,
W^2_{2\lfloor\beta/2\rfloor+\lfloor\mu/2\rfloor+\rho-1}$$ is the desired partition of $V(G)$. Note that there are exactly  $\lceil\frac{\beta}{2}\rceil+\lfloor\frac{\beta}{2}\rfloor+(\lfloor\frac{\mu+1}{2}\rfloor-\rho)+(\lfloor\frac{\mu}{2}\rfloor+\rho)=\beta+\mu=k$ subsets in this partition.

\section{A slightly stronger result}\label{sec:3}

In this section, we give a slightly stronger result than Theorem \ref{ELVA}. To begin with, we prove the following lemma.

\begin{lem}\label{lem-smallmaxdegree}
  If $G$ is a graph with $\Delta(G)<\frac{|G|-1}{2}$ and $k\geq\big\lceil\frac{|G|}{4}\big\rceil$ is an integer, then $V(G)$ can be equitably partitioned into $k$ subsets so that each of them induces a linear forest.
\end{lem}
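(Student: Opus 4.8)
The plan is to partition $V(G)$ into $k$ subsets, each inducing a linear forest, with all parts having either $\big\lfloor\frac{|G|}{k}\big\rfloor$ or $\big\lceil\frac{|G|}{k}\big\rceil$ vertices. Since $\Delta(G)<\frac{|G|-1}{2}$ and $k\geq\big\lceil\frac{|G|}{4}\big\rceil$, the part sizes $\big\lfloor\frac{|G|}{k}\big\rfloor$ and $\big\lceil\frac{|G|}{k}\big\rceil$ lie in $\{1,2,3,4\}$, so I would again split into cases on the value of $k$ relative to $\frac{|G|}{2}$, $\frac{|G|}{3}$, and $\frac{|G|}{4}$, mirroring the three-case structure of Theorem~\ref{EVAT}. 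When $k\geq\frac{|G|}{3}$ every part has at most three vertices, and when $k\geq\frac{|G|}{2}$ every part has at most two; in both regimes an arbitrary partition into parts of the prescribed sizes works, since any graph on at most three vertices is a linear forest. The only genuine work is the regime $\big\lceil\frac{|G|}{4}\big\rceil\leq k<\frac{|G|}{3}$, where some parts must contain four vertices and we must guarantee that those four-vertex parts induce linear forests rather than, say, a triangle-plus-pendant or a $K_{1,3}$.

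The key observation driving the construction is that the hypothesis $\Delta(G)<\frac{|G|-1}{2}$ is equivalent (via $\Delta(G)+\delta(G^c)=|G|-1$) to $\delta(G^c)>\frac{|G|-1}{2}$, i.e. $2\delta(G^c)\geq|G|$, which means $G^c$ is connected with large minimum degree. In fact with $2\delta(G^c)\ge |G|$ the complement $G^c$ is Hamiltonian (by Dirac's theorem), so I would first extract a Hamilton cycle, or at least long paths, of $G^c$. A path $u_0u_1u_2u_3$ in $G^c$ has the property that the four vertices induce in $G$ the complement of a path, and I would check that complements of short paths are themselves linear forests: the complement of $P_4$ on four vertices is again a $P_4$, and the complement of $P_3$ (plus an isolated vertex) is a single edge plus an isolated vertex, both linear forests. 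Thus any four consecutive vertices along a Hamilton path of $G^c$ form a four-vertex part that induces a linear forest in $G$, and likewise three consecutive vertices induce a linear forest in $G$.

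Concretely I would label $V(G)=\{v_0,v_1,\dots,v_{|G|-1}\}$ along a Hamilton cycle of $G^c$, set $\beta=|G|-3k\ge 0$ and $\mu=4k-|G|\ge 0$ (so $\beta$ parts have size $4$ and $\mu$ parts have size $3$, with $\beta+\mu=k$), and then cut the cyclic ordering into $\beta$ consecutive blocks of length $4$ followed by $\mu$ consecutive blocks of length $3$. Each block consists of consecutive vertices along the Hamilton cycle of $G^c$, so by the preceding paragraph each block induces a linear forest in $G$, and the block sizes are exactly $3$ or $4$ as required for an equitable partition. The main obstacle I anticipate is verifying cleanly that every consecutive block of length three or four along the cycle induces a linear forest in $G$ — i.e. that the relevant complements of paths (and of the ``path with a wraparound chord'' that can occur at the seam between the last $4$-block and the $3$-blocks) are always linear forests; this is a finite check on graphs with at most four vertices, but it must be stated carefully so that the cyclic wraparound does not introduce a part that is a triangle or a star. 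Once that verification is in place, counting the blocks gives exactly $\beta+\mu=k$ parts and the equitability is immediate.
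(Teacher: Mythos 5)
Your core construction is essentially the paper's own proof: from $\Delta(G)<\frac{|G|-1}{2}$ deduce $\delta(G^c)\geq\frac{|G^c|}{2}$, apply Dirac's theorem to get a Hamilton cycle of $G^c$, and cut it into $k$ consecutive blocks, using that $G$ restricted to a block is a subgraph of the complement of a short path, hence a linear forest. Your bookkeeping in the main regime ($\beta=|G|-3k$ blocks of four, $\mu=4k-|G|$ blocks of three, $\beta+\mu=k$) is sound, and your worry about the wraparound is unfounded: if all blocks are cut starting from a fixed vertex of the cycle, no block contains the closing edge of the cycle in its interior, and even in the degenerate case $k=1$ (so $|G|\leq4$) the complements $\overline{C_3}$ and $\overline{C_4}=2K_2$ are still linear forests.

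However, there is one genuine error: your dismissal of the regime $\frac{|G|}{3}\leq k<\frac{|G|}{2}$ rests on the claim that ``any graph on at most three vertices is a linear forest,'' which is false --- $K_3$ is a cycle, not a forest. Consequently an \emph{arbitrary} partition into parts of sizes two and three does not work under the hypothesis $\Delta(G)<\frac{|G|-1}{2}$: take $G$ to be a triangle together with many isolated vertices, so that $\Delta(G)=2<\frac{|G|-1}{2}$ once $|G|\geq6$; a partition placing the triangle in a single part fails. (Arbitrary partitions are safe only when every part has at most two vertices, i.e.\ when $k\geq\frac{|G|}{2}$.) The repair is immediate with your own machinery, and it is exactly what the paper does: in \emph{every} regime cut the Hamilton cycle of $G^c$ into consecutive subpaths --- on three or four vertices when $k\leq\frac{|G|}{3}$, on two or three vertices when $\frac{|G|}{3}<k\leq\frac{|G|}{2}$, and on one or two vertices when $k>\frac{|G|}{2}$ --- so that a three-vertex part always carries two non-edges of $G$ and hence induces at most one edge of $G$.
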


\begin{proof}
First of all, we notice that
\begin{align*}
\Bigg\lceil\frac{|G|}{k}\Bigg\rceil \leq  \Bigg\lceil\frac{|G|}{\big\lceil\frac{|G|}{4}\big\rceil}\Bigg\rceil\leq 4.
\end{align*}
Since
\begin{align*}
\delta(G^c)=|G|-1-\Delta(G)>\frac{|G|-1}{2}
\end{align*}
and $\delta(G^c)$ is an integer, we conclude
\begin{align*}
\delta(G^c)\geq \frac{|G|}{2}=\frac{|G^c|}{2},
\end{align*}
which implies by the well-known Dirac's Theorem that $G^c$ contains a hamiltonian cycle $C$ (note that we then have $|C|=|G^c|=|G|$). Clearly, we can split $C$ into $k$ vertex-disjoint subpaths on three or four vertices if $k\leq\frac{|G|}{3}$, or on two or three vertices if $\frac{|G|}{3}<k\leq\frac{|G|}{2}$, or on one or two vertices if $k>\frac{|G|}{2}$. In each of the above three cases, the vertices of any of the $k$ subpaths induce a linear forest in $G$. This just proves the theorem.
\end{proof}

Combining Theorem \ref{EVAT} with Lemma \ref{lem-smallmaxdegree}, we conclude the following result towards the Equitable Vertex Arboricity Conjecture.

\begin{thm}
For every graph $G$, $V(G)$ can be equitably partitioned into $k$ subsets so that each of them induces a linear forest whenever $k\geq \max\{\big\lceil\frac{\Delta(G)+1}{2}\big\rceil,\big\lceil\frac{|G|}{4}\big\rceil\}$, i.e., $$va^{\equiv}(G)\leq lva^{\equiv}(G)\leq \max\bigg\{\bigg\lceil\frac{\Delta(G)+1}{2}\bigg\rceil,\bigg\lceil\frac{|G|}{4}\bigg\rceil\bigg\}.$$
\end{thm}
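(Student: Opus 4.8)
The plan is to prove Theorem~\ref{EVAT} by a case analysis on the size of $k$ relative to $|G|$, exactly splitting the range $k\geq\lceil\frac{\Delta(G)+1}{2}\rceil$ into three regimes: $k\geq\frac{|G|}{2}$, $\frac{|G|}{3}\leq k<\frac{|G|}{2}$, and $\lceil\frac{\Delta(G)+1}{2}\rceil\leq k<\frac{|G|}{3}$. The reason for this trichotomy is that an equitable partition into $k$ parts forces every part to contain either $\lfloor\frac{|G|}{k}\rfloor$ or $\lceil\frac{|G|}{k}\rceil$ vertices, and these two values fall into the ranges $\{1,2\}$, $\{2,3\}$, and $\{3,4\}$ respectively. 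So in each regime I know exactly how many vertices each part must hold, and the task reduces to packing vertices into parts of those prescribed sizes so that each induced subgraph is a linear forest. The first regime is trivial: any part of size one or two automatically induces a linear forest, so an arbitrary equitable partition works.

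The key idea for the harder regimes is to translate the condition $\Delta(G)\geq\frac{|G|-1}{2}$ into structural information about the complement $G^c$. Using $\Delta(G)+\delta(G^c)=|G|-1$, the hypothesis gives $|G^c|=|G|\geq 2\delta(G^c)+1$, so by Lemmas~\ref{lem2} and~\ref{lem1} the complement has a matching (and more strongly, by Lemma~\ref{lem-use}, two long vertex-disjoint paths) of size roughly $\delta(G^c)=|G|-(\Delta(G)+1)$. The crucial observation is that any structure in $G^c$ that is a matching edge or a path edge corresponds to a \emph{non-edge} in $G$; a set of vertices forming a path in $G^c$ therefore induces in $G$ the complement of a path, which — once the set is kept small (at most four vertices) — is still a linear forest. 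This is what lets me build the oversized parts (size $3$ or $4$) safely: I grab the needed number of matching pairs or short subpaths from $G^c$, pad each to the target size with leftover vertices, and fill the remaining parts with small generic pieces of size two. In Case~2 this is a matching argument giving parts of size three; in Case~3 it is the two-path argument of Lemma~\ref{lem-use}, carefully apportioning how many size-four and size-three parts come from each of the two paths.

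For the final theorem, which drops the degree hypothesis, I would simply combine Theorem~\ref{EVAT} with Lemma~\ref{lem-smallmaxdegree}. If $\Delta(G)\geq\frac{|G|-1}{2}$, then $\max\{\lceil\frac{\Delta(G)+1}{2}\rceil,\lceil\frac{|G|}{4}\rceil\}=\lceil\frac{\Delta(G)+1}{2}\rceil$ and Theorem~\ref{EVAT} applies directly for every $k$ above this threshold. If instead $\Delta(G)<\frac{|G|-1}{2}$, then $\delta(G^c)\geq\frac{|G^c|}{2}$, so Dirac's Theorem gives a Hamiltonian cycle in $G^c$, which can be cut into $k$ consecutive subpaths of the prescribed sizes whenever $k\geq\lceil\frac{|G|}{4}\rceil$; each such subpath again induces (the complement of a subpath, hence) a linear forest in $G$. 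Since one of the two hypotheses always holds, the union of the two results covers all graphs.

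The main obstacle is the bookkeeping in Case~3: I must verify that the explicitly indexed families $V^1_i,U^1_i,V^2_i,U^2_i$ have the right total cardinality ($4\beta+2\mu$ accounted for, leaving exactly $\mu=4k-|G|$ vertices of $S$ to redistribute) and, more delicately, that the largest index used along each path does not exceed its length $\delta$ or $\delta-1$. This is where inequality~\eqref{eq3}, namely $2\beta+\mu\leq\delta$, is essential, and the floor/ceiling juggling with the parity corrector $\rho=2\lceil\frac{\beta}{2}\rceil-\beta$ is precisely what ensures the four index-range bounds hold simultaneously. The underlying construction is conceptually clear — cut each long $G^c$-path into a prefix of size-$4$ blocks followed by size-$3$ blocks — but making the two paths jointly yield exactly $\beta$ parts of size four and $\mu$ parts of size three, with every part a contiguous (hence linear-forest-inducing) chunk, is the genuinely technical part of the proof.
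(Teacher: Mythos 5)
Your proposal follows the paper's proof essentially verbatim: the same trichotomy on $k$ (parts of sizes $\{1,2\}$, $\{2,3\}$, $\{3,4\}$), the same translation to a matching and two disjoint paths in $G^c$ via Lemmas~\ref{lem2}, \ref{lem1} and \ref{lem-use} with the $\beta,\mu,\rho$ bookkeeping for Case~3, and the same final combination of Theorem~\ref{EVAT} with the Dirac-based Lemma~\ref{lem-smallmaxdegree}. The approach and all key steps are correct and match the paper's own argument.
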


\begin{proof}
If $\Delta(G)\geq \frac{|G|-1}{2}$, then $k\geq \max\{\big\lceil\frac{\Delta(G)+1}{2}\big\rceil,\big\lceil\frac{|G|}{4}\big\rceil\}=\big\lceil\frac{\Delta(G)+1}{2}\big\rceil$. By Theorem \ref{EVAT}, we can construct an equitable partition of $V(G)$
into $k$ subsets so that each of them induces a linear forest. If $\Delta(G)<\frac{|G|-1}{2}$, then $k\geq \max\{\big\lceil\frac{\Delta(G)+1}{2}\big\rceil,\big\lceil\frac{|G|}{4}\big\rceil\}=\big\lceil\frac{|G|}{4}\big\rceil$ and $V(G)$ can be equitably partitioned into $k$ subsets so that each of them induces a linear forest
by Lemma \ref{lem-smallmaxdegree}.
\end{proof}

\section*{Acknowledgements}

We are particularly grateful to Weichan Liu who suggests the constructive proofs of Lemmas \ref{lem-exe}--\ref{lem1}, and also thanks Jingfen Lan, Bi Li, Yan Li and Qingsong Zou for their helpful discussions on shortening the proof of Theorem \ref{EVAT}.

\bibliographystyle{srtnumbered}
\bibliography{mybib}

\end{document}